\def\dtv{d_{\mathrm{TV}}}
\def\dd#1{{\,\mathrm{d}}{#1}}
\newtheorem{thm}{Theorem}[section]
\newtheorem{cor}[thm]{Corollary}
\newtheorem{lem}[thm]{Lemma}
\newtheorem{prop}[thm]{Proposition}
\theoremstyle{remark}
\newtheorem{rem}[thm]{Remark}
\title{The estimate of $\chi^2$ distance between binomial and generalized binomial distributions}
\author{Vytas Zacharovas\\
	Institute of Computer Science\\
%	Department of Mathematics and Informatics\\
	Vilnius University\\
	Naugarduko 24, Vilnius, Lithuania\\
	E-mail: vytas.zacharovas@mif.vu.lt}
\begin{document}
\maketitle

\begin{abstract}
We obtain the estimate of difference between binomial and generalized binomial distributions in \(\chi^2\) metric and in several other related metrics. 
\end{abstract}

\section{Introduction}
We will investigate the distribution of a sum 
\[
S_n=I_1+I_2+\cdots+I_n
\] 
of $n$ independent
indicators $I_j$ taking value $1$ with probability
$p_j=\mathbb{P}(I_j=1)$ and \(0\) with probability \(1-p_j\). We will refer to the above distribution as \emph{generalized binomial distribution}. The  case when all $p_j$ are equal $p_1=p_2=\cdots =p_n=p$ corresponds to the case of  \emph{simple binomial distribution}  $\mathscr{B}(n,p)$ taking value \(j\) with probability
\[
b(n,p;j)=\binom{n}{j}p^jq^{n-j}
\] 
for all \(0\leqslant j \leqslant n\), where $q=1-p$.
 The generalized binomial distribution 
has the mean value
\[
\mathbb{E}S_n=p_1+p_2+\cdots+p_n.
\]
Thus if we chose
\begin{equation}
	\label{mean_of_p_j}
	p=\frac{1}{n}\sum_{j=1}^{n}p_j
\end{equation}
then the distribution of $S_n$  will have the same mean value as the simple binomial distribution $\mathscr{B}(n,p)$. In what follows we will denote \(q=1-p\).

 Thus it is  natural to try approximate the distribution of $S_n$ by the distribution of $\mathscr{B}(n,p)$ where \(p\) equal to the arithmetical average (\ref{mean_of_p_j}) of \(p_j\). 
 
 In what follows we assume that not all \(p_j\) are identical and equal to either \(0\) or \(1\) that is we will not consider the case when \(p_1=p_2=\cdots=p_n\in\{0,1\}\). This assumption implies that \(0<p<1\) and \(0<q<1\). Let us denote
 \begin{equation}
 \label{define_delta}
 \delta_m:=\frac{1}{n(pq)^{m/2}}\sum_{j=1}^{n}|p_j-p|^m 
 \end{equation}
In what follows for simplicity sake we will denote \(\delta=\delta_2\). 
 
  \cite{Ehm19917} investigated the difference between the distributions of $S_n$ and $\mathscr{B}(n,p)$ 
in total variation distance 
\[
\dtv(\mathscr{L}(S_n),\mathscr{B}(n,p)):=\frac{1}{2}\sum_{j=0}^{n}\left|\mathbb{P}(S_n=j)-b(n,p;j)\right|
\]
and proved the inequality
\begin{equation}
\label{ehms_bound}
C(1-p^{n+1}-q^{n+1})\delta\leqslant\dtv(\mathscr{L}(S_n),\mathscr{B}(n,p))\leqslant (1-p^{n+1}-q^{n+1})\frac{n}{(n+1)}\delta
\end{equation}
where  \(C>0\) is an absolute constant and \(\delta=\delta_2\).

This result was further improved by \cite{roos_2000} who obtained asymptotic expansion
of the difference of generalized binomial distribution \(\mathscr{L}(S_n)\) and \(\mathscr{B}(n,p)\).
The main result of our paper will be the estimate of  the $\chi^2$-distance between $S_n$ and $\mathscr{B}(n,p)$ defined as
\[
    \chi^2(\mathscr{L}(S_n),\mathscr{B}(n,p))
    =\sum_{j=0}^n\left|\frac{\mathbb{P}(S_n=j)}
    {b(n,p,j)}-1\right|^2b(n,p,j).
\]
This quantity is correctly defined whenever \(0<p<1\) which is always satisfied if \(S_n\) is not equal to constant with probability \(1\).
We prove the following theorem.
\begin{thm}\label{main_thm} For all $n\geqslant 2$ and $\delta<1$ hold the inequalities
\[
\frac{ n}{2(n-1)}\delta^2\leqslant\chi^2(\mathscr{L}(S_n),\mathscr{B}(n,p))
\leqslant \frac{n}{2(n-1)}\delta^2\left(1+O\left(\frac{\delta}{1-\delta}+\frac{1}{\sqrt{n}}\frac{\delta_3}{\delta}\right)\right).
\]
%and
%\[
%\chi^2(\mathscr{L}(S_n),\mathscr{B}(n,p))\leqslant % \frac{1}{pq}\sum_{j=1}^{n}(p_j-p)^2=
%n\delta.
%\]
\end{thm}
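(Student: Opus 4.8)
The plan is to convert the $\chi^2$-distance into an exact orthogonal-series expansion, read off the lower bound from its leading term, and bound the remaining tail. Put $d_j=p_j-p$ and $x_j=d_j/\sqrt{pq}$, so that $\sum_{j=1}^n x_j=0$, $\sum_{j=1}^n x_j^2=n\delta$ and $\sum_{j=1}^n|x_j|^3=n\delta_3$. Let $K_0,\dots,K_n$ be the Krawtchouk polynomials for the weight $b(n,p;\cdot)$, defined by $F(x,t):=(1+\tfrac qp t)^{x}(1-t)^{n-x}=\sum_{k\ge0}K_k(x)t^k$; expanding $\sum_{x=0}^n b(n,p;x)F(x,t)F(x,u)$ by the binomial theorem one finds it equals $(1+\tfrac qp tu)^n$, so the $K_k$ are orthogonal with $\sum_x b(n,p;x)K_k(x)^2=\binom nk(q/p)^k$. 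Expanding $\mathbb{P}(S_n=x)/b(n,p;x)$ in this basis, applying Parseval, and using that independence of the $I_j$ gives $\mathbb{E}K_k(S_n)=[t^k]\prod_{j=1}^n\bigl(p_j(1+\tfrac qp t)+q_j(1-t)\bigr)=[t^k]\prod_{j=1}^n\bigl(1+\tfrac{d_j}{p}t\bigr)=p^{-k}e_k(d_1,\dots,d_n)$ (with $e_k$ the $k$-th elementary symmetric polynomial), one obtains
\[
\chi^2(\mathscr{L}(S_n),\mathscr{B}(n,p))=\sum_{k=2}^{n}\frac{e_k(x_1,\dots,x_n)^2}{\binom nk};
\]
the $k=0$ term is $1$ and cancels the ``$-1$'' in $\chi^2$, while the $k=1$ term vanishes because $\sum_j x_j=0$.

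The lower bound is then immediate: every summand is non-negative, and $e_2(x)=\tfrac12\bigl((\sum_j x_j)^2-\sum_j x_j^2\bigr)=-\tfrac12 n\delta$, so the $k=2$ term alone equals $(\tfrac12 n\delta)^2/\binom n2=\frac{n}{2(n-1)}\delta^2$. For the upper bound, Newton's identities together with $\sum_j x_j=0$ give $e_3(x)=\tfrac13\sum_j x_j^3$, whence $|e_3(x)|\le\tfrac13 n\delta_3$ and the $k=3$ term is $O(\delta_3^2/n)$; since $\sum_j|x_j|^3\le(\sum_j x_j^2)^{3/2}$ forces $\delta_3\le\sqrt n\,\delta^{3/2}\le\sqrt n\,\delta$, this is $O\bigl(\tfrac{n}{2(n-1)}\delta^2\cdot n^{-1/2}\delta_3/\delta\bigr)$, of the advertised size.

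It remains to bound $T:=\sum_{k\ge4}e_k(x)^2/\binom nk$. Here one should \emph{not} use $e_k(x)^2\le\binom nk e_k(x_1^2,\dots,x_n^2)$: that bound only yields $T\le\prod_j(1+x_j^2)-1$, which can be as large as $(1+\delta)^n-1$ and discards the decisive cancellation $\sum_j x_j=0$. Instead, insert $\binom nk^{-1}=(n+1)\int_0^1 y^k(1-y)^{n-k}\dd{y}$, interchange sum and integral, and apply Parseval for Fourier series ($\sum_k e_k(x)^2 r^k=\frac1{2\pi}\int_{-\pi}^{\pi}|\prod_j(1+x_j\sqrt r\,e^{i\theta})|^2\dd{\theta}$ with $r=y/(1-y)$) to obtain
\[
\chi^2(\mathscr{L}(S_n),\mathscr{B}(n,p))+1=\frac{n+1}{2\pi}\int_0^1\!\!\int_{-\pi}^{\pi}\ \prod_{j=1}^n\bigl|\sqrt{1-y}+x_j\sqrt y\,e^{i\theta}\bigr|^2\dd{\theta}\,\dd{y}.
\]
The $j$-th factor equals $(1-y)+x_j^2 y+2x_j\sqrt{y(1-y)}\cos\theta$, so on pulling out $(1-y)^n$ and taking logarithms the $x_j$-linear part has zero sum and the first Fourier mode disappears; the second Fourier mode reproduces $1+\frac{n}{2(n-1)}\delta^2$ after the $\theta$- and $y$-integrations, and the higher modes are controlled using $\delta<1$ together with $|\sum_j x_j^m|\le(\sum_j x_j^2)^{m/2}=(n\delta)^{m/2}$ for $m\ge2$, which produces the factor $1+O\bigl(\tfrac{\delta}{1-\delta}+n^{-1/2}\delta_3/\delta\bigr)$. (Equivalently, one can expand $e_k(x)$ in the power sums $p_m=\sum_j x_j^m$ via $\sum_k e_k t^k=\exp(-\tfrac12 p_2 t^2+\tfrac13 p_3 t^3-\cdots)$ and estimate term by term, using $|p_m|\le(n\delta)^{m/2}$ and combinatorial bounds such as $\binom n{2l}\ge\binom{\lceil n/2\rceil}{l}\binom{\lfloor n/2\rfloor}{l}$.)

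This last step is the crux and the main obstacle. One has to show that $T$ — equivalently, the higher-mode part of the double integral — is of order $\delta^2\cdot o(1)$ rather than merely $\delta^2$; the crude Cauchy--Schwarz bound is useless for this, the degree cutoff $k\le n$ cannot be ignored (so one may not simply extend the relevant series to infinity), and the cancellation $\sum_j x_j=0$ must be kept in play at every stage (through the vanishing of $e_1$, of $p_1$, and of the first Fourier mode). The error terms $\delta/(1-\delta)$ and $n^{-1/2}\delta_3/\delta$ are exactly what survives this bookkeeping.
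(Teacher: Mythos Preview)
Your setup and lower bound are correct and match the paper exactly: the Krawtchouk--Parseval expansion gives $\chi^2=\sum_{k\ge2}e_k(x)^2/\binom nk$, and the $k=2$ term alone is $\tfrac{n}{2(n-1)}\delta^2$. The integral representation you write down is also the one the paper uses (Theorem~\ref{krawtchouk-parseval_integral}).

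The gap is in the upper bound: you do not actually carry out the estimate of the tail $T$, and you say so yourself (``This last step is the crux and the main obstacle''). Phrases like ``the higher modes are controlled using $\delta<1$ together with $|p_m|\le(n\delta)^{m/2}$'' and ``taking logarithms'' are not a proof. In particular, the log-expansion $\prod_j(1+x_jt)=\exp(-\tfrac12 p_2t^2+\tfrac13 p_3t^3-\cdots)$ only converges for $|t|<1/\max_j|x_j|$, whereas the integral runs over the full range $y\in(0,1)$ (equivalently $t\in(0,\infty)$); you give no argument for large $t$, and the crude bound you warn against is the only one you have there.

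What the paper actually does is split the integral at $u=1/(n\delta)$. On $(0,1/(n\delta))$ it applies a quantitative inequality of the form
\[
\Bigl|\prod_j(1+v_j)e^{-v_j}-1\Bigr|\le \tfrac12 V_2+\bigl(\tfrac{c_1}{4}V_2^2+c_2V_3\bigr)e^{V_2/2},
\qquad V_m=\sum_j|v_j|^m,
\]
with $v_j=(p_j-p)re^{it}$ (Lemma~\ref{inequality_3}), which after Minkowski and the beta integrals produces precisely the $\delta$, $\delta^2$ and $\delta_3$ contributions. On $(1/(n\delta),\infty)$ it uses the elementary bound $\bigl|\prod_j(1+x_jz)\bigr|^2\le(1+|z|^2\tfrac1n\sum_jx_j^2)^n$ (Lemma~\ref{inequality_1}), which yields the residual $O\bigl(\delta^3/(1-\delta)\bigr)$ term. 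Without some device of this kind---a concrete inequality valid for all $t$ plus a splitting that keeps $V_2=un\delta\le1$ where the refined bound is used---your argument is incomplete.
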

Note that the constants in the \(O(\ldots)\) symbol can be made explicit (see the inequality of Proposition \ref{P_upper_bound}) however since the resulting expression is somewhat cumbersome we have chosen to suppress the exact constants in the main formulation of our result.

The condition requiring $\delta$ to be smaller than $1$ is not very restrictive since as was noted in \cite{Ehm19917} this quantity can be expressed as
\[
\delta=1-\frac{\mathbb{V}S_n}{npq}
\]
and as a consequence $\delta$ never exceeds \(1\)  with equality $\delta=1$ being possible only when  $S_n$ is a constant with probability \(1\).

Thus our estimates imply that 
\[
\frac{\chi^2(\mathscr{L}(S_n),\mathscr{B}(n,p))}{\delta^2}\sim \frac{ n}{2(n-1)}
\]
if \(\delta\to 0\) and  \(\delta_3/(\sqrt{n}\delta) \to 0\) . Note that the term
\(\delta_3/(\sqrt{n}\delta)\) does not exceed \(\sqrt{5}\) (see Lemma \ref{L_ineq_delta}), which means that \(\chi^2(\mathscr{L}(S_n),\mathscr{B}(n,p))=O(\delta^2)\) if \(\delta\) does not exceed some fixed constant smaller than \(1\).

Note that although the mean values of \(S_n\) and \(\mathscr{B}(n,p)\) coincide, their corresponding variances \((1-\delta)npq\) and \(npq\)  can differ considerably if \(\delta\) is not small enough. A number of papers were  devoted to approximating generalized binomial distribution \(\mathscr{L}(S_n)\) by simple binomial distribution \(\mathscr{B}(n^*,p^*)\) where the parameters \(p^*\) and \(n^*\) are chosen in such a way as to minimize the difference of both mean and variances of \(S_n\) and \(\mathscr{B}(n^*,p^*)\). It is shown that considerable improvement of closeness of approximation is obtained in this way (see e.g. \cite{pekoz_rollin_cekanavicius_shwartz_2009} and references therein). We expect that the approach we develop in this paper can also be applied to this setting also.

Unlike Ehm's approach that is based on Stein's method, the main idea of our proof is purely analytic and relies on the integral form of  Parseval identity for the Krawtchouk polynomials. The proof follows the same pattern that was first used in \cite{zacharovas_hwang_2010} to evaluate the $\chi^2$ distance between Poisson distribution and generalized Bernoulli distribution. 
\subsection{The estimates for other probability distances}

The estimate for the \(\chi^2\) distance can be used to get upper bounds for a number of other probability metrics. For example, by trivial application of Cauchy inequality
we immediately get upper  bound for the total variation distance
 \[
     \dtv(\mathscr{L}(S_n),\mathscr{B}(n,p))
     \leqslant  \frac{1}{2}\sqrt{\chi^2(\mathscr{L}(S_n),\mathscr{B}(n,p))}.
 \]
  thus replacing here \(\chi^2(\mathscr{L}(S_n),\mathscr{B}(n,p))\) by its upper bound provided by  Theorem \ref{main_thm} we obtain the estimate
   \[
       \dtv(\mathscr{L}(S_n),\mathscr{B}(n,p))
       \leqslant  \frac{1}{2^{3/2}}\delta \sqrt{\frac{n}{n-1}\left(1+O\left(\frac{\delta}{1-\delta}+\frac{1}{\sqrt{n}}\frac{\delta_3}{\delta}\right)\right)}.
   \]
   Since $1/2^{3/2}=0.353553\ldots$ the above bound can be smaller than Ehm's upper bound (\ref{ehms_bound})
  for sufficiently small $\delta$ and sufficiently large $n$ as $p$ is fixed $0<p<1$. The constant $1/2^{3/2}=0.353553$ of the above inequality is not optimal since as was shown by \cite{roos_2000} in his Theorem 3, the optimal upper bound contains constant \(1/\sqrt{2\pi e}=0.2419707\) in its leading term.

The upper bound for $\chi^2$ also provides the upper bound for 
   Kullback-Leibner divergence (or information divergence) defined as
   \[
   d_{\text{KL}}(\mathscr{L}(S_n),\mathscr{B}(n,p))
   :=\sum_{j=0}^n \mathbb{P}(S_n=j)\log
   \frac{\mathbb{P}(S_n=j)}
   {b(n,p,j)}
   \]
   due to the simple inequality
   \[
   d_{\text{KL}}(\mathscr{L}(S_n),\mathscr{B}(n,p))  \leqslant\chi^2(\mathscr{L}(S_n),\mathscr{B}(n,p)).
   \]
In a similar fashion the upper bound for $\chi^2$ quickly leads to a non-uniform bound for the  difference of distribution functions. Indeed, suppose \(K_n\) is a random variable distributed as a simple binomial variable \(\mathscr{B}(n,p)\). Then application of Cauchy inequality gives the estimate
    \[
    \bigl|\mathbb{P}(S_n\leqslant x)-\mathbb{P}(K_n\leqslant x)\bigr|\leqslant \sqrt{\chi^2(\mathscr{L}(S_n),\mathscr{B}(n,p))}\sqrt{R(x)}
    \]
    where
    \[
    R(x):=\min \bigl\{\mathbb{P}(K_n\leqslant x),\mathbb{P}(K_n>x) \bigr\}.
    \]
    Clearly $R(x)\leqslant 1/2$ therefore the above estimate after taking the supremum over all \(x\in \mathbb{R}\) leads to the upper bound for the Kolomogorov's distance.

\section{Proofs}
\subsection{ Krawtchouk - Parseval identity}
In order to investigate the \(\chi^2\) metric we will need a formula expressing the weighted sum of squares of numbers \(a_0,a_1,\ldots,a_n\) in terms of the generating function of coefficients these numbers. Such expression was obtained in \cite{chen_hwang_zacharovas_2014} as a consequence of the orthogonality property of Krawtchouk polynomials and the related Parseval identity. In view of importance of this identity for further analysis we provide here its new proof that is purely analytic and does not involve Krawtchouk polynomials. In fact the identity of the following theorem can serve as a starting point for deriving Krawtchouk polynomials as the polynomial that are orthogonal with respect to binomial measure.The following proof is of independent interest and can be generalized and applied to other classes of orthogonal polynomials.
\begin{thm}[Krawtchouk - Parseval identity, \cite{chen_hwang_zacharovas_2014}]
\label{krawtchouk-parseval_integral}
Suppose
\[
F(z)
=\sum_{k=0}^na_k z^k,
\]
then 
\[
 \sum_{k=0}^n\frac{|a_k|^2}{\binom{n}{k}p^kq^{n-k}}
 =(n+1)\int_0^\infty\frac{J_n\left(F,p;\sqrt{\frac{u}{pq}}\right)}{(1+u)^{n+2}}\dd u,
\]
where
\begin{equation}
    J_n(F,p;r)
    :=\frac{1}{2\pi}\int_{-\pi}^\pi
    \left|(1-pre^{it})^n
    F\left(\frac{1+qre^{it}}{1-pre^{it}}\right)\right|^2\dd t.
\end{equation}
\end{thm}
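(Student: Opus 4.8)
The plan is to establish the identity by a direct analytic computation, working backwards from the right-hand side. First I would expand the square inside $J_n(F,p;r)$: writing $F(z)=\sum_k a_k z^k$ and using $|w|^2=w\bar w$, the integrand becomes
\[
(1-pre^{it})^n\,\overline{(1-pre^{it})^n}\sum_{k,\ell}a_k\bar a_\ell
\left(\frac{1+qre^{it}}{1-pre^{it}}\right)^k
\overline{\left(\frac{1+qre^{it}}{1-pre^{it}}\right)^\ell}.
\]
For each fixed pair $(k,\ell)$ the $(1-pre^{it})^n$ factors partially cancel against the denominators, leaving $(1-pre^{it})^{n-k}(1+qre^{it})^k$ times the conjugate with $\ell$ in place of $k$. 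So $J_n(F,p;r)$ is a sum over $k,\ell$ of $a_k\bar a_\ell$ times $\frac{1}{2\pi}\int_{-\pi}^\pi (1-pre^{it})^{n-k}(1+qre^{it})^k\overline{(1-pre^{it})^{n-\ell}(1+qre^{it})^\ell}\dd t$, which is the constant (zeroth Fourier) coefficient of a product of a polynomial in $e^{it}$ and a polynomial in $e^{-it}$.

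Next I would compute that constant Fourier coefficient explicitly. The function $(1-pz)^{n-k}(1+qz)^k$ is a polynomial in $z$; call its coefficients $c_m^{(k)}$, so $(1-pz)^{n-k}(1+qz)^k=\sum_m c_m^{(k)} z^m$. Then $\frac{1}{2\pi}\int_{-\pi}^\pi(\cdot_k)\overline{(\cdot_\ell)}\dd t=\sum_m c_m^{(k)}\overline{c_m^{(\ell)}}$. Plugging $r=\sqrt{u/(pq)}$ means replacing $z$ by $r e^{it}$ and tracking the powers of $r$, i.e. of $u$. The key step is then to integrate term by term against $(n+1)/(1+u)^{n+2}$ over $u\in(0,\infty)$ using the Beta integral $\int_0^\infty \frac{u^m}{(1+u)^{n+2}}\dd u=\frac{m!\,(n+1-m)!}{(n+1)!}=\frac{1}{(n+1)\binom{n}{m}}$ for $0\le m\le n$ (and noting the powers of $u$ arising from $c_m^{(k)}\overline{c_m^{(\ell)}}$ with the $r^m\cdot r^m=(u/(pq))^m$ weighting are exactly $u^m$, which stay in range because the polynomials have degree $\le n$). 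This collapses the $u$-integral to $\sum_m \frac{1}{(n+1)\binom{n}{m}}(pq)^{-m}c_m^{(k)}\overline{c_m^{(\ell)}}$, and after multiplying by $(n+1)$ and summing over $k,\ell$ one should recognize $\sum_{k,\ell}a_k\bar a_\ell\sum_m (pq)^{-m}c_m^{(k)}\overline{c_m^{(\ell)}}/\binom{n}{m}$.

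The final step is to see that this double sum equals $\sum_k |a_k|^2/\bigl(\binom{n}{k}p^kq^{n-k}\bigr)$, which is precisely the orthogonality statement: the polynomials $K_k(x):=(1-px)^{?}\ldots$ — more concretely, what the computation is really proving is that $\sum_m (pq)^{-m}c_m^{(k)}\overline{c_m^{(\ell)}}/\binom{n}{m}=\delta_{k\ell}/\bigl(\binom{n}{k}p^kq^{n-k}\bigr)$, i.e. that the rows of the transformation matrix $(c_m^{(k)})$ are orthogonal with respect to the weights $(pq)^{-m}/\binom{n}{m}$. The cleanest way to verify this last identity without invoking Krawtchouk polynomials is probably to interpret $\sum_m c_m^{(k)}\overline{c_m^{(\ell)}}(pq)^{-m}/\binom{n}{m}$ again as an integral — i.e. run the Beta-integral step in reverse, or equivalently compute $\frac{1}{2\pi}\int(1-pz)^{n-k}(1+qz)^k\overline{(1-pz)^{n-\ell}(1+qz)^\ell}$ under a suitable substitution that turns it into the standard orthogonality of $\{z^k\}$ on the circle; the substitution $w=(1+qz)/(1-pz)$ (with $z=re^{it}$, $r$ chosen so the image is the unit circle) is the natural candidate, and tracking the Jacobian together with the weight $(1+u)^{-(n+2)}$ is what produces the binomial weight $\binom{n}{k}p^kq^{n-k}$ in the denominator.

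I expect the main obstacle to be the bookkeeping in this last step: verifying that the change of variables $w=(1+qze^{it})/(1-pze^{it})$ (equivalently, the substitution in the $u$-integral) maps a circle to a circle and that the induced measure, after the $(1+u)^{-(n+2)}$ weighting, is exactly the uniform-type measure on the unit circle that makes $\{(1-pz)^{n-k}(1+qz)^k\}_k$ into an orthogonal family with the claimed normalization. Once that Möbius change of variables is set up correctly, the rest is the elementary Beta integral and rearrangement of finite sums. An alternative route that sidesteps the geometry is to verify the orthogonality relation $\sum_m c_m^{(k)}\overline{c_m^{(\ell)}}(pq)^{-m}/\binom{n}{m}=\delta_{k\ell}/\bigl(\binom{n}{k}p^kq^{n-k}\bigr)$ purely algebraically via generating functions — multiply by $x^k y^\ell$, sum over $k,\ell$, and recognize the resulting double generating function as $(1+\text{something})^n$ — which is the approach I would fall back on if the contour manipulation gets unwieldy.
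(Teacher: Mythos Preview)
Your reduction is correct up to the point where you arrive at the orthogonality relation
\[
\sum_{m=0}^n \frac{c_m^{(k)}\,c_m^{(\ell)}}{\binom{n}{m}(pq)^m}=\frac{\delta_{k\ell}}{\binom{n}{k}p^kq^{n-k}},
\]
and both of your proposed ways of finishing (M\"obius substitution, or generating functions) can be made to work. But this is precisely the Krawtchouk orthogonality relation, so your route amounts to: reduce the integral identity to Krawtchouk orthogonality, then prove Krawtchouk orthogonality. The paper goes in the opposite direction, and this is its point: it proves the integral identity \emph{without} passing through any discrete orthogonality, and then obtains that orthogonality afterwards as a corollary.

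Concretely, the paper starts from the \emph{left}-hand side, applies ordinary Parseval to $|F(re^{it})|^2$ on the circle, sets $r^2=uq/p$, and integrates against $(1+u)^{-(n+2)}\dd u$ using the same Beta integral you use. This expresses the left side as a single integral over the whole plane, $(n+1)\pi^{-1}q^{-n}\int_{\mathbb C}|F(z\sqrt{q/p})|^2(1+|z|^2)^{-(n+2)}\,dx\,dy$. Then the M\"obius substitution $z=\sqrt{p/q}\,(1+qw)/(1-pw)$ is applied to the \emph{planar} integral; the algebraic identity $q|1-pw|^2+p|1+qw|^2=1+pq|w|^2$ converts the weight cleanly, and passing back to polar coordinates in $w$ gives the stated form. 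Working over all of $\mathbb C$ avoids the issue you flagged as the ``main obstacle'': for $p\neq q$ there is no radius $r$ for which the image of $|z|=r$ under $(1+qz)/(1-pz)$ is the unit circle, so your fixed-$r$ M\"obius idea would not give the orthogonality of $\{w^k\}$ on a single circle; the paper's device of integrating over all radii first is exactly what sidesteps this.
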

\begin{proof}

By Parseval identity
\[
    \frac{1}{2\pi}\int_{-\pi}^{\pi}|F(re^{it})|^2\,dt
    =\sum_{k=0}^n|a_k|^2 r^{2k}.
\]
Replacing here $r=\sqrt{uq/p}$, multiplying both sides of this equation by $1/(1+u)^{n+2}$ and integrating by $u$ from $0$ to $+\infty$ we obtain
\[
    \int_{0}^{\infty}\frac{1}{(1+u)^{n+2}}\left(\frac{1}{2\pi}\int_{-\pi}^{\pi}\bigl|F(\sqrt{uq/p}e^{it})\bigr|^2\,dt\right)\,du
    =\sum_{k=0}^n|a_k|^2(q/p)^k\int_{0}^{\infty}\frac{u^{k}}{(1+u)^{n+2}}\,du .
\]
Introducing a change of variables $u\to u^2$ into the integral on the left side of the above identity and noting that
\begin{equation}
\label{int_u}
(n+1)\int_0^\infty\frac{u^k\,du}{(1+u)^{n+2}}
=\binom{n}{k}^{-1}
\end{equation}
 we get
\[
   q^{-n}(n+1)\frac{1}{\pi} \int_{0}^{\infty}\frac{u}{(1+u^2)^{n+2}}\left(\int_{-\pi}^{\pi}\bigl|F(ue^{it}\sqrt{q/p})\bigr|^2\,dt\right)\,du
    = \sum_{k=0}^n\frac{|a_k|^2}{\binom{n}{k}p^kq^{n-k}}.
\]
Note that the double integral can be regarded as being
obtained from an integral over all complex plane
\[
q^{-n}(n+1)  \frac{1}{\pi} \int_{\mathbb{C}}\frac{\bigl|F(z\sqrt{q/p})\bigr|^2}{(1+|z|^2)^{n+2}}\,dx\,dy
\]
where \(z=x+iy\) by passing to polar coordinates $z=x+iy=re^{it}$. Thus
\[
  q^{-n}(n+1)  \frac{1}{\pi} \int_{\mathbb{C}}\frac{\bigl|F(z\sqrt{q/p})\bigr|^2}{(1+|z|^2)^{n+2}}\,dx\,dy= \sum_{k=0}^n\frac{|a_k|^2}{\binom{n}{k}p^kq^{n-k}}.
\]
let us make a new change of variables
\[
z=\sqrt{p/q}\frac{1+qw}{1-pw}
\]
 taking into account that the determinant of the Jacobian matrix of such transform is $(p/q)/|1-pw|^4$ we get
\[
\begin{split}
 \sum_{k=0}^n\frac{|a_k|^2}{\binom{n}{k}p^kq^{n-k}}&= q^{-n}(n+1) \frac{1}{\pi}  \int_{\mathbb{C}}\frac{\left|F\left(\frac{1+qw}{1-pw}\right)\right|^2}{\left(1+(p/q)\left|\frac{1+qw}{1-pw}\right|^2\right)^{n+2}}\frac{p/q}{|1-pw|^4}\,|dw|
 \\
 &= pq(n+1)  \frac{1}{\pi} \int_{\mathbb{C}}\frac{\left|F\left(\frac{1+qw}{1-pw}\right)\right|^2|1-pw|^{2n}}{\left(q\left|{1-pw}\right|^2+q\left|{1+qw}\right|^2\right)^{n+2}}\,|dw|
 \\
  &= pq(n+1)  \frac{1}{\pi} \int_{\mathbb{C}}\frac{\left|F\left(\frac{1+qw}{1-pw}\right)\right|^2|1-pw|^{2n}}{\left(1+pq|w|^2\right)^{n+2}}\,|dw|
\end{split}
\]
Here we used the fact that
\[
q\left|{1-pw}\right|^2+p\left|{1+qw}\right|^2=1+pq|w|^2
\]
Introducing now a change to polar coordinates $w=re^{it}$ we get
\[
\begin{split}
 &\sum_{k=0}^n\frac{|a_k|^2}{\binom{n}{k}p^kq^{n-k}}\\
 &= 2pq(n+1)   \int_{0}^\infty\frac{r}{{\left(1+pq|r|^2\right)^{n+2}}}\left(\frac{1}{2\pi}\int_{-\pi}^\pi
 \left|(1-pre^{it})^n
 F\left(\frac{1+qre^{it}}{1-pre^{it}}\right)\right|^2\dd t\right)\,dr
\end{split}
\]
Noting that the internal integral coincides with  $J_n(F,p;r)$ as defined in the formulation of the theorem, we can rewrite our identity as
\[
 \sum_{k=0}^n\frac{|a_k|^2}{\binom{n}{k}p^kq^{n-k}}= 2pq(n+1)   \int_{0}^\infty\frac{r}{{\left(1+pq|r|^2\right)^{n+2}}}J_n(F,p;r)\,dr
\]
which after the change of variables $r=\sqrt{u/(pq)}$ takes the form of the identity stated in the formulation of the theorem.

\end{proof}
\begin{cor}
	\label{cor_parseval}
	Let $F(z)$ be a polynomial 
	\[
	F(z)
	=\sum_{k=0}^na_k z^k,
	\] then
\[
    \sum_{k=0}^n\frac{|a_k|^2}{\binom{n}{k}p^kq^{n-k}}=\sum_{j=0}^n\frac{|c_j|^2}{\binom{n}{j}(pq)^j},
\]
where $c_0,c_1,\ldots$ are the Taylor coefficients in the expansion
\[
  (1-pw)^nF\left(\frac{1+qw}{1-pw}\right)
    =\sum_{j=0}^n c_jw^j.
\]
\end{cor}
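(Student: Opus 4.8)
The plan is to deduce Corollary \ref{cor_parseval} directly from Theorem \ref{krawtchouk-parseval_integral}, the only new observation being that the inner integral $J_n(F,p;r)$ is itself a (finite) power series in $r$ whose coefficients are exactly the $c_j$. First I would introduce
\[
G(w):=(1-pw)^n F\!\left(\frac{1+qw}{1-pw}\right)
=\sum_{k=0}^n a_k\,(1+qw)^k(1-pw)^{n-k},
\]
and note that, since $F$ is a polynomial of degree at most $n$, the right-hand side is a genuine polynomial in $w$ of degree at most $n$; writing $G(w)=\sum_{j=0}^n c_j w^j$ gives precisely the coefficients appearing in the statement.

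Next I would apply the ordinary Parseval identity to $G$ on the circle of radius $r$, which shows that
\[
J_n(F,p;r)=\frac{1}{2\pi}\int_{-\pi}^\pi\bigl|G(re^{it})\bigr|^2\dd t=\sum_{j=0}^n|c_j|^2 r^{2j}.
\]
Substituting $r=\sqrt{u/(pq)}$ and plugging this into the identity of Theorem \ref{krawtchouk-parseval_integral} yields
\[
\sum_{k=0}^n\frac{|a_k|^2}{\binom{n}{k}p^kq^{n-k}}
=(n+1)\int_0^\infty\frac{1}{(1+u)^{n+2}}\sum_{j=0}^n\frac{|c_j|^2}{(pq)^j}\,u^j\dd u.
\]
Because the sum over $j$ is finite, I can interchange summation and integration with no analytic scruples and evaluate each term by the beta-type integral (\ref{int_u}), namely $(n+1)\int_0^\infty u^j(1+u)^{-n-2}\dd u=\binom{n}{j}^{-1}$ for $0\leqslant j\leqslant n$. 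This collapses the right-hand side to $\sum_{j=0}^n|c_j|^2/\bigl(\binom{n}{j}(pq)^j\bigr)$, which is the claimed identity.

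There is no genuinely hard step here; the one point that must be respected is that $G$ is a polynomial of degree at most $n$ rather than merely a power series. If a term with $j>n$ were present, (\ref{int_u}) would no longer apply (and the corresponding $\binom{n}{j}$ would vanish), so the whole computation hinges on the hypothesis that $F$ has degree at most $n$, which guarantees $\deg G\leqslant n$ via the expansion $(1-pw)^nF\bigl((1+qw)/(1-pw)\bigr)=\sum_{k=0}^n a_k(1+qw)^k(1-pw)^{n-k}$.
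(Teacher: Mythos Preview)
Your proof is correct and follows essentially the same route as the paper: apply Parseval to the polynomial $G(w)=(1-pw)^nF\bigl((1+qw)/(1-pw)\bigr)$ to write $J_n(F,p;r)=\sum_{j=0}^n|c_j|^2r^{2j}$, substitute into the integral identity of Theorem~\ref{krawtchouk-parseval_integral}, and evaluate term by term via~(\ref{int_u}). Your added remark that $\deg G\leqslant n$ is needed for~(\ref{int_u}) to apply is a helpful point the paper leaves implicit.
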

\begin{proof}
By Parseval identity
\[
\begin{split}
    J_n(F,p;r)
    =\frac{1}{2\pi}\int_{-\pi}^\pi
    \Bigl|\sum_{j=0}^n c_jw^j\Bigr |^2\dd t=\sum_{j=0}^n|c_j|^2 r^{2j}.
\end{split}
\]
Plugging this expression of $J_n(F,p;r)$ into the integral inside the identity of Theorem \ref{krawtchouk-parseval_integral} and using the expression for the integral (\ref{int_u}) we obtain the proof of the Corollary.
\end{proof}

\subsection{The generalized binomial distribution}
Let us apply Corollary \ref{cor_parseval} with \(a_j=\mathbb{P}(S_n=j)\). The generating function of  coefficients \(a_j\) will be equal to
\[
f(z)=\sum_{j=0}^{n}\mathbb{P}(S_n=j)z^j=\prod_{j=1}^{n}(q_j+p_jz)
\]
where $q_j=1-p_j$. Then  Corollary \ref{cor_parseval} leads to identity
\begin{equation}
\label{main_identity_1}
\sum_{ t=0}^n\left|\frac{\mathbb{P}(S_n=t)}{b(n,p,t)}\right|^2b(n,p,t)=\sum_{j=0}^n\frac{|c_j|^2}{\binom{n}{j}(pq)^j}
\end{equation}
where \(c_j\) are the coefficient of the polymomial
\[
(1-pw)^n f\left(\frac{1+qw}{1-pw}\right)=\sum_{j=0}^{n}c_jw^j
\]
Which after a few simple calculations on the left side of the above equation yields the identity
\[
\prod_{j=1}^{n}\bigl(1+(p_j-p)w\bigr)=\sum_{j=0}^{n}c_jw^j
\]
Hence computing the first and second derivatives of the above expression and recalling the definition (\ref{define_delta}) of $\delta$ we obtain
\[
c_0=1,\quad c_1=0,\quad c_2=-\frac{1}{2}\sum_{j=1}^{n}(p_j-p)^2=-\frac{1}{2}pqn\delta 
\]
Discarding all but the first three terms of the right side of identity (\ref{main_identity_1}) we obtain the lower bound for the sum
\begin{equation}
\label{lowerebound}
\sum_{t=0}^n\left|\frac{\mathbb{P}(S_n=t)}{b(n,p,t)}\right|^2b(n,p,t)\geqslant |c_0|^2+\frac{|c_1|^2}{\binom{n}{1}pq}+\frac{|c_2|^2}{\binom{n}{2}(pq)^2}=1+\frac{ n}{2(n-1)}\delta^2.
\end{equation}
When \(n=2\) the above inequality turns into identity. 

If \(n=3\) then \(c_3=(p_1-p)(p_2-p)(p_3-p)\) and thus  by Cauchy inequality stating that geometric average does not exceed the arithmetic mean we get
\[
\sqrt[3]{ |p_1-p|^3|p_2-p|^3|p_3-p|^3}\leqslant \frac{|p_1-p|^3+|p_2-p|^3+|p_3-p|^3}{3}
\]
or in our notations \(|c_3|\leqslant (pq)^{3/2}\delta_3\). Hence we immediately obtain the inequality
\begin{equation}
\label{lowerebound_1}
\begin{split}
\sum_{t=0}^3\left|\frac{\mathbb{P}(S_3=t)}{b(3,p,t)}\right|^2b(3,p,t)&= |c_0|^2+\frac{|c_1|^2}{\binom{3}{1}pq}+\frac{|c_2|^2}{\binom{3}{2}(pq)^2}
+\frac{|c_3|^2}{\binom{3}{3}(pq)^3}
\\
&\leqslant 1+\frac{ 3}{4}\delta^2+\delta_3^2
\end{split}
\end{equation}

The inequality of the following Lemma is proved inside Lemma 2.2 of \cite{roos_2014}, however in view of its importance to our argument we provide its proof here.
\begin{lem}
\label{inequality_1}
Suppose $x_1,x_2,\ldots,x_n\in \mathbb{R}$ are such that 
\[
x_1+x_2+\cdots+x_n=0
\]
then the inequality 
\[
\left|\prod_{j=1}^{n}(1+x_jz) \right|^{2}\leqslant
\left(1+|z|^2\frac{1}{n}\sum_{j=1}^{n}x_j^2\right)^n
\]
holds for all complex $z\in \mathbb{C}$.
\end{lem}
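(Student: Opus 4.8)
The plan is to reduce the claimed inequality to a single application of the arithmetic--geometric mean inequality, applied to the $n$ nonnegative reals $|1+x_jz|^2$. First I would expand each factor on the left-hand side: writing $z=x+iy$, one has
\[
|1+x_jz|^2=(1+x_jz)(1+x_j\bar z)=1+2x_j\,\Re z+x_j^2|z|^2,
\]
which is manifestly nonnegative since it is a squared modulus. Thus $\prod_{j=1}^n|1+x_jz|^2$ is a product of $n$ nonnegative numbers, and AM--GM applies without any extra hypothesis on $z$ or on the $x_j$.

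The key step is then the chain
\[
\prod_{j=1}^n|1+x_jz|^2\leqslant\left(\frac1n\sum_{j=1}^n|1+x_jz|^2\right)^{\!n}
=\left(1+2\,\Re z\cdot\frac1n\sum_{j=1}^nx_j+|z|^2\cdot\frac1n\sum_{j=1}^nx_j^2\right)^{\!n},
\]
where the first inequality is AM--GM and the equality is the averaged form of the expansion above. Now I would invoke the hypothesis $x_1+\cdots+x_n=0$ to annihilate the cross term $2\,\Re z\cdot\frac1n\sum_{j=1}^nx_j$, which reduces the right-hand side to exactly $\bigl(1+|z|^2\frac1n\sum_{j=1}^nx_j^2\bigr)^n$, the quantity claimed in the lemma.

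I do not expect any serious obstacle here: the only point that needs care is justifying that AM--GM is legitimately applicable, and this is automatic because each $|1+x_jz|^2$ is a genuine squared modulus, hence nonnegative. The substantive role of the constraint $\sum_{j=1}^nx_j=0$ is exactly to kill the linear-in-$\Re z$ term in the arithmetic mean, without which the bound would carry an extra, sign-indefinite contribution. Equality holds precisely when all the $|1+x_jz|^2$ coincide, e.g.\ when all $x_j$ are equal (and hence all zero given the constraint), which is consistent with the degenerate case excluded elsewhere in the paper.
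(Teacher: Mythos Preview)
Your proof is correct and is essentially identical to the paper's own argument: the paper applies the AM--GM inequality (called ``Cauchy inequality'' there) to the nonnegative terms $|1+x_jz|^2$, expands $|1+w|^2=1+2\Re w+|w|^2$, and then uses $\sum_j x_j=0$ to kill the linear term. The only difference is the order in which you expand and average, which is immaterial.
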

\begin{proof}
Applying Cauchy inequality we obtain
\[
\left|\prod_{j=1}^{n}(1+x_jz) \right|^{2}\leqslant
\left(\frac{1}{n}\sum_{j=1}^{n}|1+x_jz|^2\right)^n
\]
Note that for any complex $w$ we have $|1+w|^2=1+2\Re w+|w|^2$. This gives us
\[
\begin{split}
\left|\prod_{j=1}^{n}(1+x_jz) \right|^{2}&\leqslant
\left(\frac{1}{n}\sum_{j=1}^{n}(1+2x_j\Re z+x_j^2|z|^2)\right)^n
\\
&\leqslant \left(1+|z|^2\frac{1}{n}\sum_{j=1}^{n}x_j^2\right)^n
\end{split}
\]
since by condition of the theorem $x_1+x_2+\cdots+x_n=0$.
\end{proof}

\begin{thm}
	\label{upper_bound_simple} Whenever  \(0<\delta<1\) holds the inequality
\[
\sum_{t=0}^n\left|\frac{\mathbb{P}(S_n=t)}{b(n,p,t)}\right|^2b(n,p,t)
\leqslant\frac{1-\delta^{n+1}}{1-\delta}.
\]
\end{thm}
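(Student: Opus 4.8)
The plan is to feed the probability generating function of $S_n$ into the integral form of the Krawtchouk--Parseval identity (Theorem~\ref{krawtchouk-parseval_integral}) and then estimate the inner integral $J_n$ by Lemma~\ref{inequality_1}. First I would apply Theorem~\ref{krawtchouk-parseval_integral} with $F=f$, where $f(z)=\prod_{j=1}^n(q_j+p_jz)$ and $a_k=\mathbb{P}(S_n=k)$, so that $|a_k|^2/\bigl(\binom{n}{k}p^kq^{n-k}\bigr)=\bigl|\mathbb{P}(S_n=k)/b(n,p,k)\bigr|^2b(n,p,k)$ and the left-hand side of the theorem to be proved appears exactly. As already computed in the derivation leading to $\prod_{j=1}^n\bigl(1+(p_j-p)w\bigr)=\sum_{j}c_jw^j$, one has $(1-pw)^nf\bigl(\tfrac{1+qw}{1-pw}\bigr)=\prod_{j=1}^n\bigl(1+(p_j-p)w\bigr)$, hence for real $r\geqslant 0$
\[
J_n(f,p;r)=\frac{1}{2\pi}\int_{-\pi}^{\pi}\Bigl|\prod_{j=1}^n\bigl(1+(p_j-p)re^{it}\bigr)\Bigr|^2\,dt.
\]

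Next I would invoke Lemma~\ref{inequality_1} with $x_j=p_j-p$. Its hypothesis $\sum_j x_j=0$ holds because $p$ is the arithmetic mean (\ref{mean_of_p_j}) of the $p_j$, and $\tfrac1n\sum_j x_j^2=pq\delta$ by the definition (\ref{define_delta}) of $\delta=\delta_2$. Taking $z=re^{it}$ in the lemma gives the pointwise bound $\bigl|\prod_{j=1}^n(1+(p_j-p)re^{it})\bigr|^2\leqslant(1+pq r^2\delta)^n$, and therefore $J_n(f,p;r)\leqslant(1+pqr^2\delta)^n$. Substituting this into Theorem~\ref{krawtchouk-parseval_integral}, where the argument of $J_n$ is $\sqrt{u/(pq)}$ so that $pqr^2=u$, yields
\[
\sum_{t=0}^n\Bigl|\frac{\mathbb{P}(S_n=t)}{b(n,p,t)}\Bigr|^2b(n,p,t)\leqslant(n+1)\int_0^\infty\frac{(1+u\delta)^n}{(1+u)^{n+2}}\,du.
\]

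To finish, I would expand $(1+u\delta)^n=\sum_{k=0}^n\binom{n}{k}\delta^k u^k$ and integrate term by term using the elementary identity (\ref{int_u}), namely $(n+1)\int_0^\infty u^k(1+u)^{-n-2}\,du=\binom{n}{k}^{-1}$; termwise integration is legitimate since all summands are nonnegative. The binomial coefficients cancel and the right-hand side collapses to the geometric sum $\sum_{k=0}^n\delta^k=(1-\delta^{n+1})/(1-\delta)$, which is exactly the claimed bound (the restriction $0<\delta<1$ only serving to write the sum in closed form).

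I do not anticipate a genuine obstacle. The only points demanding care are: checking that the two normalizations $\sum_j(p_j-p)=0$ and $\tfrac1n\sum_j(p_j-p)^2=pq\delta$ are precisely what Lemma~\ref{inequality_1} consumes; making sure the change of variable implicit in Theorem~\ref{krawtchouk-parseval_integral} turns $pqr^2\delta$ into $u\delta$; and justifying the interchange of sum and integral. The structural heart of the argument is the cancellation of $\binom{n}{k}$ against $\binom{n}{k}^{-1}$, which is what produces the clean geometric series.
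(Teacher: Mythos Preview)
Your proposal is correct and follows essentially the same route as the paper: apply the Krawtchouk--Parseval identity to $f$, bound $J_n$ via Lemma~\ref{inequality_1}, and reduce to the integral $(n+1)\int_0^\infty(1+u\delta)^n(1+u)^{-n-2}\,du$. The only cosmetic difference is in the final evaluation of that integral: the paper makes the substitution $u=1/y-1$ to get $(n+1)\int_0^1\bigl(y+(1-y)\delta\bigr)^n\,dy=(1-\delta^{n+1})/(1-\delta)$ directly, whereas you expand $(1+u\delta)^n$ binomially and invoke (\ref{int_u}) term by term to obtain $\sum_{k=0}^n\delta^k$---both computations are elementary and equally clean.
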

\begin{proof}
Applying the identity of Theorem \ref{krawtchouk-parseval_integral} with \(a_j=\mathbb{P}(S_n=j)\) we can express the sum on the left hand side of the identity in the formulation of the theorem as
\begin{equation}
\label{eq_a_j=P}
\sum_{t=0}^n\left|\frac{\mathbb{P}(S_n=t)}{b(n,p,t)}\right|^2b(n,p,t)
=(n+1)\int_0^\infty\frac{J_n\left(f,p;\sqrt{\frac{u}{pq}}\right)}
{(1+u)^{n+2}}\dd u
\end{equation}
where
\[
\begin{split}
J_n(f,p;r)
=\frac{1}{2\pi}\int_{-\pi}^\pi
\left|\prod_{j=1}^{n}\bigl(1+(p_j-p)re^{it}\bigr)\right|^2\dd t 
\end{split}
\]
Applying inequality of Lemma \ref{inequality_1} with \(x_j=p_j-p\) and \(z=re^{it}\)  
we can evaluate the above the above integral as
\[
J_n(f,p;r)
     \leqslant \left(1+r^2pq\delta\right)^n
\]
Using this inequality to evaluate the integral on right hand side of the identity (\ref{eq_a_j=P}) we obtain 
\[
\begin{split}
\sum_{t=0}^n\left|\frac{\mathbb{P}(S_n=t)}{b(n,p,t)}\right|^2b(n,p,t)
    \leqslant(n+1)\int_0^\infty\frac{\left(1+u\delta\right)^n}{(1+u)^{n+2}}\dd u,
    \end{split}
\]
Introducing change of variables $u=1/y-1$ in the  integral on the right hand side of the above inequality and obtain an explicit expression for it
\[
(n+1)\int_0^\infty\frac{\left(1+u\delta\right)^n}{(1+u)^{n+2}}\dd u
        =(n+1)\int_0^1\left(y+(1-y)\delta\right)^n\dd u
        =\frac{1-\delta^{n+1}}{1-\delta}
\]

\end{proof}
\begin{cor}\label{chi<=n} For all \(n\geqslant 2\) and \(\delta<1\) holds the inequality  
\[
\frac{ n}{2(n-1)}\delta^2\leqslant\chi^2
(\mathscr{L}(S_n),\mathscr{B}(n,p))\leqslant
\delta\frac{1-\delta^{n}}{1-\delta}.
\]
If \(n=2\) or \(n=3\) we have more accurate estimates
\[
\chi^2(\mathscr{L}(S_2),\mathscr{B}(2,p))=\delta^2
\]
and
\[
\frac{3}{4}\delta^2\leqslant\chi^2(\mathscr{L}(S_3),\mathscr{B}(3,p))
\leqslant \frac{3}{4}\delta^2+\delta_3^2.
\]
\end{cor}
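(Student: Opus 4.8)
The plan is to deduce everything from the two‑sided estimate already in hand for the quantity
\[
M_n:=\sum_{t=0}^n\left|\frac{\mathbb{P}(S_n=t)}{b(n,p,t)}\right|^2b(n,p,t),
\]
after first recording the elementary identity $\chi^2(\mathscr{L}(S_n),\mathscr{B}(n,p))=M_n-1$. This comes from expanding the square in the definition of $\chi^2$ and using $\sum_{t=0}^n\mathbb{P}(S_n=t)=\sum_{t=0}^nb(n,p,t)=1$, so that the cross term contributes $-2$ and the constant term $+1$, for a net $-1$. Once this is in place, the corollary is purely a matter of subtracting $1$ from the bounds on $M_n$.

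For the general inequalities I would invoke the lower bound (\ref{lowerebound}), namely $M_n\geqslant 1+\frac{n}{2(n-1)}\delta^2$, which gives at once $\chi^2(\mathscr{L}(S_n),\mathscr{B}(n,p))\geqslant\frac{n}{2(n-1)}\delta^2$; and Theorem \ref{upper_bound_simple}, which gives $M_n\leqslant\frac{1-\delta^{n+1}}{1-\delta}$, whence
\[
\chi^2(\mathscr{L}(S_n),\mathscr{B}(n,p))\leqslant\frac{1-\delta^{n+1}}{1-\delta}-1=\frac{\delta-\delta^{n+1}}{1-\delta}=\delta\,\frac{1-\delta^n}{1-\delta},
\]
the algebraic rearrangement being legitimate because $0<\delta<1$.

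For the sharper statements: when $n=2$ the right‑hand side of (\ref{main_identity_1}) has only the three terms $c_0,c_1,c_2$, so (\ref{lowerebound}) is actually an equality, $M_2=1+\delta^2$, and therefore $\chi^2(\mathscr{L}(S_2),\mathscr{B}(2,p))=\delta^2$. When $n=3$, the lower bound $\chi^2\geqslant\tfrac34\delta^2$ is simply the case $n=3$ of the general lower bound, since $\tfrac{n}{2(n-1)}=\tfrac34$; the upper bound follows from the computation (\ref{lowerebound_1}), where $M_3=1+\tfrac34\delta^2+|c_3|^2/(pq)^3$ holds exactly and the arithmetic--geometric mean bound $|c_3|\leqslant(pq)^{3/2}\delta_3$ yields $M_3\leqslant 1+\tfrac34\delta^2+\delta_3^2$, hence $\chi^2(\mathscr{L}(S_3),\mathscr{B}(3,p))\leqslant\tfrac34\delta^2+\delta_3^2$. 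There is essentially no obstacle in this argument; the only points requiring a moment's care are the bookkeeping of the additive $-1$ and, in the case $n=3$, reading the first line of (\ref{lowerebound_1}) as an exact identity before the arithmetic--geometric mean estimate is applied.
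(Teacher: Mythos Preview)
Your proposal is correct and follows essentially the same approach as the paper: you use the identity $\chi^2=M_n-1$, invoke (\ref{lowerebound}) for the lower bound, Theorem~\ref{upper_bound_simple} for the upper bound, and the observations around (\ref{lowerebound}) and (\ref{lowerebound_1}) for the cases $n=2,3$. The paper's proof is terser (it does not spell out the $n=2,3$ cases inside the proof of the corollary, relying instead on the displayed computations just before), but the substance is identical.
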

\begin{proof}
Let us note that
\[\chi^2
(\mathscr{L}(S_n),\mathscr{B}(n,p))=\sum_{ t=0}^n\left|\frac{\mathbb{P}(S_n=t)}{b(n,p,t)}\right|^2b(n,p,t)-1
\]
hence the lower bound for the \(\chi^2\) will follow from  inequality (\ref{lowerebound}) for the sum in the above identity
\[
\chi^2
(\mathscr{L}(S_n),\mathscr{B}(n,p))\geqslant\frac{ n}{2(n-1)}\delta^2
\]
while the the inequality  of the Theorem \ref{upper_bound_simple} provides the upper bound 
\[
\chi^2
(\mathscr{L}(S_n),\mathscr{B}(n,p))
\leqslant
\frac{1-\delta^{n+1}}{1-\delta}-1
=
\delta\frac{1-\delta^{n}}{1-\delta}
\]
\end{proof}
The upper bound for \(\chi^2\) of Theorem \ref{upper_bound_simple} is \(O(\delta)\)  as \(\delta\to 0\) and thus is far from optimal for small \(\delta\). In order to show that upper bound can be improved to \(O(\delta^2)\) we will need  more refined versions of the inequality for product of complex numbers than the one provided by Lemma \ref{inequality_1}.

\begin{lem}[\cite{zacharovas_hwang_2010}]
	\label{inequality_3} For any complex numbers $\{v_k\}$, the following
	inequality holds
	\begin{equation}\label{ineq-P1}
	\begin{split}
	\left| \prod_{1\le k\le n}(1+v_k)e^{-v_k}
	-1\right|
	\leqslant \frac{V_2}{2}+\left(\frac{c_1}{4} V_2^2
	+c_2V_3 \right) e^{V_2/2},
	\end{split}
	\end{equation}
	where 
	\[
	V_2 := \sum_{1\le k\le n} |v_k|^2 \quad \hbox{and}\quad V_3 := \sum_{1\le k\le n} |v_k|^3
	\]

	$c_1 = \sqrt{e}-1\approx 0.6487$ and 
	\[
	c_2 = \frac12\int_0^1 e^{t^2/2}(1-t^2) dt
	\approx 0.3706.
	\]
\end{lem}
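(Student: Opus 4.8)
The plan is to prove the inequality by induction on $n$, peeling off one factor at a time. Write $a_k:=(1+v_k)e^{-v_k}$, $P_m:=\prod_{1\le k\le m}a_k$, and abbreviate $V_2=\sum_{k\le n}|v_k|^2$, $V_3=\sum_{k\le n}|v_k|^3$, with $V_2',V_3'$ the corresponding sums over $k\le n-1$. The whole argument rests on the factorisation $P_n-1=a_n(P_{n-1}-1)+(a_n-1)$ together with two pointwise facts about a single factor $a=(1+v)e^{-v}$: (a) $|a|\le e^{|v|^2/2}$; and (b) $a-1=-\tfrac{v^2}{2}+\rho(v)$ with $|\rho(v)|\le c_2\,|v|^3e^{|v|^2/2}$. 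Indeed, (a) and (b) yield $|P_n-1|\le e^{|v_n|^2/2}\,|P_{n-1}-1|+\tfrac{|v_n|^2}{2}+c_2|v_n|^3e^{|v_n|^2/2}$, into which I would insert the induction hypothesis applied to $v_1,\dots,v_{n-1}$.

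Fact (a) comes from $|a|^2=(1+2\Re v+|v|^2)e^{-2\Re v}\le e^{2\Re v+|v|^2}e^{-2\Re v}=e^{|v|^2}$, using $1+x\le e^x$. For (b): since $\frac{d}{ds}\bigl[(1+sv)e^{-sv}\bigr]=-sv^2e^{-sv}$, integration gives $(1+v)e^{-v}=1-v^2\int_0^1 se^{-sv}\,ds$; splitting $e^{-sv}=1+(e^{-sv}-1)$ and then $e^{-sv}-1=-sv\int_0^1 e^{-s\sigma v}\,d\sigma$ isolates $\rho(v)=v^3\int_0^1\int_0^1 s^2e^{-s\sigma v}\,d\sigma\,ds$. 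Using $|e^{-s\sigma v}|=e^{-s\sigma\Re v}\le e^{s\sigma|v|}$ and completing the square, $s\sigma|v|-\tfrac{|v|^2}{2}\le\tfrac{(s\sigma)^2}{2}$, one obtains $|\rho(v)|\le|v|^3e^{|v|^2/2}\int_0^1\int_0^1 s^2e^{(s\sigma)^2/2}\,d\sigma\,ds$; and the substitution $u=s\sigma$ followed by Fubini collapses the double integral,
\[
\int_0^1\int_0^1 s^2e^{(s\sigma)^2/2}\,d\sigma\,ds=\int_0^1 s\int_0^s e^{u^2/2}\,du\,ds=\int_0^1 \frac{1-u^2}{2}\,e^{u^2/2}\,du=c_2,
\]
exactly the constant in the statement.

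It then remains to close the induction. After cancelling the matching $\tfrac12|v_n|^2$ and $\tfrac12V_2'$ terms and using $V_3-V_3'=|v_n|^3$ together with $e^{|v_n|^2/2}\le e^{V_2/2}$, the target bound for $n$ follows once
\[
\frac{V_2'}{2}\bigl(e^{|v_n|^2/2}-1\bigr)\le\frac{c_1}{4}\bigl(V_2^2-V_2'^2\bigr)e^{V_2/2}.
\]
Since $V_2^2-V_2'^2=|v_n|^2\bigl(2V_2-|v_n|^2\bigr)$ and $e^x-1\le xe^x$, the left side is at most $\tfrac14V_2'|v_n|^2e^{V_2/2}$, so it suffices to check $V_2'\le c_1\bigl(2V_2-|v_n|^2\bigr)$, i.e. $V_2(1-2c_1)\le|v_n|^2(1-c_1)$ — and this is immediate because $c_1=\sqrt e-1$ lies strictly between $\tfrac12$ and $1$, so the left side is $\le0$ and the right side is $\ge0$. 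The base case $n=0$ is trivial, and no separate treatment is needed when some $a_k=0$ (that is, $v_k=-1$): facts (a), (b) and the factorisation all remain valid. The only point that genuinely needs care is arranging the induction so that it closes with precisely these constants: the value of $c_2$ is forced by the double-integral identity above, while $c_1$ only has to satisfy $c_1>\tfrac12$, which the stated value does with room to spare. (Alternatively one can avoid induction altogether by telescoping just twice, $P_n-1=\sum_k(a_k-1)+\sum_{k<l}(a_k-1)(a_l-1)P_{k-1}$, and bounding the two sums directly with (a), (b) and $|P_{k-1}|\le e^{(V_2-|v_k|^2-|v_l|^2)/2}$; this reproduces the same shape, though with a slightly larger coefficient of $V_2^2$.)
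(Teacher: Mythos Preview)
The paper does not prove this lemma: it is quoted verbatim from \cite{zacharovas_hwang_2010} and used as a black box, so there is no in-paper argument to compare against. What I can do is check your proof on its own terms.

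Your argument is correct. Facts (a) and (b) are verified cleanly; in particular the double-integral manipulation that produces exactly the constant $c_2=\tfrac12\int_0^1(1-u^2)e^{u^2/2}\,du$ is right (the substitution $u=s\sigma$ followed by swapping the order of integration gives $\int_0^1 e^{u^2/2}\int_u^1 s\,ds\,du$). The induction closes as you say: after inserting the hypothesis and using $e^{|v_n|^2/2}\le e^{V_2/2}$ on the $V_3'$ and $|v_n|^3$ terms, the only residual inequality is
\[
\frac{V_2'}{2}\bigl(e^{|v_n|^2/2}-1\bigr)\le\frac{c_1}{4}\bigl(V_2^2-V_2'^2\bigr)e^{V_2/2},
\]
and your reduction via $e^x-1\le xe^x$ and $V_2'=V_2-|v_n|^2$ to the linear inequality $V_2(1-2c_1)\le|v_n|^2(1-c_1)$ is correct; since $c_1=\sqrt{e}-1\in(\tfrac12,1)$ the left side is nonpositive and the right nonnegative. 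One small sharpening of your remark: the induction actually closes for any $c_1\ge\tfrac12$ (not just $c_1>\tfrac12$), so your proof in fact yields the lemma with the slightly better constant $c_1=\tfrac12$ in place of $\sqrt{e}-1$. The parenthetical ``telescope twice'' alternative at the end is also a valid route, though as you note it does not recover the sharp $c_1$.
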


\begin{prop}
	\label{P_upper_bound} For all $n\geqslant 4$ and $\delta<1$ holds the inequality
\[
\begin{split}
\chi^2(\mathscr{L}(S_n),\mathscr{B}(n,p))
&\leqslant\frac{n}{2 (n - 1) }\Biggl(\delta+\frac{a_1n}{\sqrt{(n-3)(n-2)}  }\delta^2
+\frac{a_2}{ \sqrt{n - 2} }\delta_3\Biggr)^2
\\
&\quad+\frac{n}{n-1}\frac{6e}{1-\delta}\delta^3
\end{split}
\]
where \(a_1=\sqrt{3}c_1e^{1/2}\approx 1.8525\) and \(a_2=2\sqrt{3}c_2e^{1/2}\approx2.1166\) and \(c_1\), \(c_2\) are the same constants as in the formulation of Lemma \ref{inequality_3}.
	
\end{prop}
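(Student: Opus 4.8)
The plan is to run the whole argument through the integral identity of Theorem~\ref{krawtchouk-parseval_integral}, applied as in (\ref{eq_a_j=P}). Writing $P(w)=\prod_{k=1}^{n}\bigl(1+(p_k-p)w\bigr)$, one first recasts the $\chi^2$ distance as
\[
\chi^2(\mathscr{L}(S_n),\mathscr{B}(n,p))=(n+1)\int_0^\infty\frac{1}{(1+u)^{n+2}}\left(\frac{1}{2\pi}\int_{-\pi}^{\pi}\bigl|P(re^{it})-1\bigr|^2\dd t\right)\dd u,\qquad r=\sqrt{u/(pq)},
\]
which follows from (\ref{eq_a_j=P}) together with $(n+1)\int_0^\infty(1+u)^{-(n+2)}\dd u=1$ and the fact that $P-1$ has vanishing constant Fourier coefficient, so that $\frac1{2\pi}\int_{-\pi}^{\pi}(|P|^2-1)\dd t=\frac1{2\pi}\int_{-\pi}^{\pi}|P-1|^2\dd t$. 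I then split the outer integral at the cutoff $u_0:=1/(n\delta)$ and treat $[0,u_0]$ and $[u_0,\infty)$ separately.

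On $[0,u_0]$ I apply Lemma~\ref{inequality_3} with $v_k=(p_k-p)re^{it}$. Since $\sum_k(p_k-p)=0$ we have $\prod_k e^{-v_k}=1$, hence $P(re^{it})=\prod_k(1+v_k)e^{-v_k}$, and the right-hand side of (\ref{ineq-P1}) depends on the $v_k$ only through $|v_k|=|p_k-p|\,r$, hence is independent of $t$. Substituting $V_2=r^2\sum_k(p_k-p)^2=un\delta$ and $V_3=r^3\sum_k|p_k-p|^3=u^{3/2}n\delta_3$, and using $e^{V_2/2}\le e^{1/2}$ throughout $[0,u_0]$ (this is the whole point of taking $u_0$ with $u_0n\delta=1$), Lemma~\ref{inequality_3} gives $|P(re^{it})-1|\le g(u)$ uniformly in $t$, where
\[
g(u):=\frac{un\delta}{2}+e^{1/2}\Bigl(\tfrac{c_1}{4}\,n^2\delta^2u^2+c_2\,n\delta_3u^{3/2}\Bigr).
\]
Hence the $[0,u_0]$ contribution to $\chi^2$ is at most $(n+1)\int_0^\infty g(u)^2(1+u)^{-(n+2)}\dd u=\lVert g\rVert_{L^2(\mu)}^2$, where $\mu$ is the probability measure $(n+1)(1+u)^{-(n+2)}\dd u$ on $[0,\infty)$; enlarging the domain is harmless since $g\ge 0$. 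Splitting $g$ into its three monomials, applying Minkowski's inequality in $L^2(\mu)$, and using the moment identities $\int u^k\,\dd\mu=\binom{n}{k}^{-1}$ from (\ref{int_u}) --- finite for $k=1,2,3,4$ exactly when $n\ge 4$ --- reduces $\lVert g\rVert_{L^2(\mu)}$ to $\sqrt{\tfrac{n}{2(n-1)}}\bigl(\delta+\tfrac{a_1n}{\sqrt{(n-3)(n-2)}}\delta^2+\tfrac{a_2}{\sqrt{n-2}}\delta_3\bigr)$, where checking $a_1=\sqrt3\,c_1e^{1/2}$ and $a_2=2\sqrt3\,c_2e^{1/2}$ is a routine computation with $\binom{n}{2}$, $\binom{n}{3}$, $\binom{n}{4}$.

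On $[u_0,\infty)$ I drop the refinement and keep only the crude bound of Lemma~\ref{inequality_1}: $\frac1{2\pi}\int_{-\pi}^{\pi}|P(re^{it})-1|^2\dd t=J_n(f,p;r)-1\le(1+u\delta)^n-1$. The corresponding contribution is therefore at most $(n+1)\int_{u_0}^\infty\bigl((1+u\delta)^n-1\bigr)(1+u)^{-(n+2)}\dd u$, and the substitution $u=1/y-1$ used in the proof of Theorem~\ref{upper_bound_simple} turns it into $(n+1)\int_0^{y_0}\bigl((1-\delta)y+\delta\bigr)^n\dd y-y_0^{\,n+1}$ with $y_0=\tfrac{n\delta}{n\delta+1}$. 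Evaluating the elementary integral and using $(1-\delta)y_0+\delta=\tfrac{(n+1)\delta}{n\delta+1}$ bounds this by $\tfrac1{1-\delta}\bigl(\tfrac{(n+1)\delta}{n\delta+1}\bigr)^{n+1}$, so it only remains to prove the pointwise inequality $\bigl(\tfrac{(n+1)\delta}{n\delta+1}\bigr)^{n+1}\le\tfrac{6en}{n-1}\,\delta^3$ for $0<\delta<1$, $n\ge 4$; a short calculus check shows the left side divided by $\delta^3$ attains its maximum at $\delta=\tfrac{n-2}{3n}$, with value $27\bigl(1-\tfrac2n\bigr)^{n-2}\le 27e^{-1}<6e$. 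Adding the two contributions yields exactly the stated bound.

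The main obstacle is choosing $u_0$ correctly: it must be small enough to keep $e^{V_2/2}$ under control on $[0,u_0]$ --- this forces $u_0\asymp 1/(n\delta)$ and is precisely where the $e^{1/2}$ in $a_1,a_2$ enters --- while still being large enough that the tail $[u_0,\infty)$ contributes only at order $\delta^3/(1-\delta)$; the pointwise inequality above is what makes that balance succeed. Everything else is bookkeeping: expanding squares, applying Minkowski, and evaluating the beta-type integrals (\ref{int_u}).
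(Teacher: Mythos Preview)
Your proposal is correct and follows essentially the same route as the paper: the Krawtchouk--Parseval integral representation, the split at $u_0=1/(n\delta)$, Lemma~\ref{inequality_3} with $e^{V_2/2}\le e^{1/2}$ plus Minkowski and the moments~(\ref{int_u}) on $[0,u_0]$, and Lemma~\ref{inequality_1} with the substitution $u=1/y-1$ on $[u_0,\infty)$. The only cosmetic differences are that the paper applies Theorem~\ref{krawtchouk-parseval_integral} directly to $g(z)=f(z)-(pz+q)^n$ rather than deducing the $|P-1|^2$ integrand from~(\ref{eq_a_j=P}) via the constant-coefficient observation, and that the paper finishes the tail by the cruder device $(1+1/(n\delta))^{n+1}\ge \binom{n+1}{3}(n\delta)^{-3}$ together with $(1+1/n)^{n+1}\le e(1+1/n)$, whereas you optimize $\bigl(\tfrac{(n+1)\delta}{n\delta+1}\bigr)^{n+1}/\delta^3$ directly; both yield the same constant $\tfrac{6en}{n-1}$.
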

\begin{proof}
Applying the Krawtchouk - Parseval  identity of Theorem \ref{krawtchouk-parseval_integral} with \(F(z)=g(z)\)  where
\[
g(z)=\prod_{j=1}^{n}(q_j+p_jz)-(pz+q)^n=\sum_{k=0}^{n}\bigl(\mathbb{P}(S_n=k)-b(n,p,k)\bigr)z^k
\]
we can express the \(\chi^2\) metric as
\begin{equation}
\label{main_int}
\chi^2(\mathscr{L}(S_n),\mathscr{B}(n,p))
    =(n+1)\int_0^\infty\frac{J_n\left(g,p;\sqrt{\frac{u}{pq}}\right)}
    {(1+u)^{n+2}}\dd u
\end{equation}
where
\[
\begin{split}
J_n(g,p;r)
    &=\frac{1}{2\pi}\int_{-\pi}^\pi\left|\prod_{j=1}^{n}\bigl(1+(p_j-p)re^{it}\bigr)-1\right|^2 \dd t
    \end{split}
\]
We will evaluate the integral on the right side of  identity (\ref{main_int}) by first  obtaining upper bounds for the function under the integration sign by applying the estimates provided by Lemma \ref{inequality_1} and Lemma \ref{inequality_3}.

 In order to make the expression of \(J_n(g,p;r)\) ready for the application of Lemma \ref{inequality_3} let us note first that 
\[
\prod_{j=1}^{n}e^{-(p_j-p)re^{it}}=1
\]
therefore we can rewrite 
\[
\begin{split}
J_n(g,p;r)
&=\frac{1}{2\pi}\int_{-\pi}^\pi\left|\prod_{j=1}^{n}\bigl(1+(p_j-p)re^{it}\bigr)e^{-(p_j-p)re^{it}}-1\right|^2 \dd t.
\end{split}
\]
Applying the inequality of Lemma \ref{inequality_3}  with \(v_j=(p_j-p)re^{it}\) we obtain an upper bound for the quantity under the integration sign of the above integral in terms of quantities
\[
V_2= r^2npq\delta_2\quad\hbox{and} \quad V_3= r^3n(pq)^{3/2}\delta_3
\]
which leads to the estimate
\begin{equation}
\label{inequality_j_n}
J_n\left(g,p;\sqrt{\frac{u}{pq}}\right)\leqslant \left(\frac{un\delta}{2}+\left(\frac{c_1}{4} (un\delta)^2
+c_2u^{3/2}n\delta_3 \right) e^{un\delta/2}\right)^2
\end{equation}
using the above inequality we can estimate
\[
\begin{split}
&\sqrt{\int_0^{1/(n\delta)}\frac{J_n\left(g,p;\sqrt{\frac{u}{pq}}\right)}
	{(1+u)^{n+2}}\dd u}
\\
&\leqslant \sqrt{\int_0^{1/(n\delta)}\left(\frac{un\delta}{2}+\left(\frac{c_1}{4} (un\delta)^2
+c_2u^{3/2}n\delta_3 \right) e^{1/2}\right)^2\frac{\dd u}
{(1+u)^{n+2}}}
\end{split}
\]
Applying Minkowski inequality to the above integral we  estimate  it by a sum of three integrals
\[
\begin{split}
&\sqrt{\int_0^{1/(n\delta)}\frac{J_n\left(g,p;\sqrt{\frac{u}{pq}}\right)}
	{(1+u)^{n+2}}\dd u}
\\
&\leqslant
\frac{n\delta}{2}\sqrt{\int_0^{\infty}\frac{u^2\dd u}
	{(1+u)^{n+2}}}+\frac{c_1(n\delta)^2e^{1/2}}{4}\sqrt{\int_0^{\infty}\frac{u^4\dd u}
	{(1+u)^{n+2}}}
\\
&\quad\quad\quad\quad\quad\quad+c_2n\delta_3e^{1/2}\sqrt{\int_0^{\infty}\frac{u^3\dd u}
	{(1+u)^{n+2}}}
\end{split}
\]
Note that the integrals inside the left side of the above inequality can be expressed in terms of binomial coefficients according to our previously encountered formula (\ref{int_u}). As a result we get
\[
\begin{split}
&\int_0^{1/(n\delta)}\frac{J_n\left(g,p;\sqrt{\frac{u}{pq}}\right)}
{(1+u)^{n+2}}\dd u
\\
&\leqslant
\left(\frac{n\delta}{2}\sqrt{\frac{\binom{n}{2}^{-1}}{n + 1}}+\frac{c_1(n\delta)^2e^{1/2}}{4}\sqrt{\frac{\binom{n}{4}^{-1}}{n + 1}}
+c_2n\delta_3e^{1/2}\sqrt{\frac{\binom{n}{3}^{-1}}{ n + 1}}\right)^2
\end{split}
\]
Further simplifying the above estimate we get
\begin{equation}
\label{int_part_1}
\begin{split}
&(n+1)\int_0^{1/(n\delta)}\frac{J_n\left(g,p;\sqrt{\frac{u}{pq}}\right)}
{(1+u)^{n+2}}\dd u
\\
&\leqslant
\frac{n}{2 (n - 1) }\Biggl(\delta+c_1\delta^2e^{1/2}\sqrt{\frac{3n^2}{(n-3)(n-2)  }}
+c_2\delta_3e^{1/2}\sqrt{\frac{12}{ n - 2 }}\Biggr)^2
\end{split}
\end{equation}
The estimate (\ref{inequality_j_n})  contains a rapidly increasing multiplier \(e^{un\delta/2}\) and as such would result in a divergent integral if applied to evaluate the integral on the right hand side of the Krawtchouk - Parseval identity (\ref{main_int}) for large \(u\). For evaluating \(J_n(g,p;r)\) for large \(r\) we note that
\[
J_n(g,p;r)
=\frac{1}{2\pi}\int_{-\pi}^\pi\left|\prod_{j=1}^{n}\bigl(1+(p_j-p)re^{it}\bigr)\right|^2 \dd t-1
\]
and applying inequality of Lemma \ref{inequality_1} with \(z=re^{it}\) and \(x_j=p_j-p\) to evaluate the product under the integration sign we obtain the estimate
\begin{equation}
\label{J_inequality_1}
J_n(g,p;r) \leqslant \left(1+r^2pq\delta\right)^n-1
\end{equation}
whose upper bound is a  polynomial of degree \(2n\). Hence we can estimate the remaining integral
\[
(n+1)\int_{1/(n\delta)}^\infty\frac{J_n\left(g,p;\sqrt{\frac{u}{pq}}\right)}
{(1+u)^{n+2}}\dd u<(n+1)\int_{1/(n\delta)}^\infty\frac{(1+u\delta)^n}
{(1+u)^{n+2}}\dd u
\]
Making a change of variables $u=1/y-1$ in the last integral  we get
\[
\begin{split}
(n+1)\int_{1/(n\delta)}^\infty\frac{(1+u\delta)^n}
        {(1+u)^{n+2}}\dd u&=(n+1)\int_{0}^{1/\left(1+\frac{1}{n\delta}\right)}\bigl(y+\delta(1-y)\bigr)^n\dd y
        \\
        &=\frac{1}{1-\delta}\left(\left(\frac{1+\frac{1}{n}}{1+\frac{1}{n\delta}}\right)^{n+1}-\delta^{n+1}\right)
\\
 &\leqslant\frac{1}{1-\delta}\left(\frac{1+\frac{1}{n}}{1+\frac{1}{n\delta}}\right)^{n+1}
 \\
  &\leqslant\frac{e\left(1+\frac{1}{n}\right)}{1-\delta}\frac{6n^2\delta^3}{n^2-1}
       \end{split}
\]
here in the last step we used the inequality
\[
\left(1+\frac{1}{n\delta}\right)^{n+1}\geqslant C_{n+1}^3\frac{1}{n^3\delta^3}=\frac{n^2-1}{6n^2\delta^3}
\]
Hence finally we obtain the estimate
\begin{equation}
\label{int_part_2}
(n+1)\int_{1/(n\delta)}^\infty\frac{J_n\left(g,p;\sqrt{\frac{u}{pq}}\right)}
        {(1+u)^{n+2}}\dd u\leqslant \frac{n}{n-1}\frac{6e}{1-\delta}\delta^3.
\end{equation}

Splitting the integral on the right side of Krawtchouk - Parseval identity identity (\ref{main_int}) into two the integrals over intervals \((0,1/(n\delta))\) and \((1/(n\delta),\infty)\) and applying the obtained upper bounds (\ref{int_part_1}) and (\ref{int_part_2}) for these integrals   we complete the proof of the proposition.

\end{proof}
\begin{rem}
 As follows from the proof of Proposition \ref{P_upper_bound} a more precise estimate   can be obtained by replacing the term \(\frac{n}{n-1}\frac{6e}{1-\delta}\delta^3\) in the main inequality of Proposition \ref{P_upper_bound} with a smaller but more complicated term \(\frac{1}{1-\delta}\left(\left(\frac{1+\frac{1}{n}}{1+\frac{1}{n\delta}}\right)^{n+1}-\delta^{n+1}\right)\).
\end{rem}

\begin{proof}[Proof of Theorem \ref{main_thm}] The lower bound of the  inequality  follows from Corollary \ref{chi<=n}. The upper bound follows from the estimate of Proposition \ref{P_upper_bound} for \(n\geqslant 4\). If \(n\leqslant3\) the required estimates follows from the estimates (\ref{lowerebound}) and (\ref{lowerebound_1}) we obtained earlier by considering the cases \(n=2\) and \(n=3\).
\end{proof}
The following Lemma shows that the error terms inside \(O(\ldots)\)  of our main result presented in Theorem \ref{main_thm} are bounded when \(\delta\) does not exceed some constant smaller than \(1\).
\begin{lem} 
	\label{L_ineq_delta}
	For all \(n\geqslant 2\) holds the inequality
\[
\frac{1}{\sqrt{n}}\frac{\delta_3}{\delta}\leqslant \sqrt{2+\sqrt{\frac{8}{n}}+\frac{1}{n}}.
\]
\end{lem}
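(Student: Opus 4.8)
The plan is to reduce the statement to an elementary inequality about the numbers $x_j:=p_j-p$ and then to exploit the single structural fact that each $p_j$ lies in $[0,1]$. I would first assume $\delta>0$ (if $\delta=0$ all the $p_j$ coincide and both sides degenerate). Since $p$ is the average (\ref{mean_of_p_j}) of the $p_j$ we have $\sum_{j=1}^n x_j=0$, and the definition (\ref{define_delta}) of $\delta_3$ and $\delta=\delta_2$ lets the powers of $pq$ be cancelled:
\[
\frac{1}{\sqrt n}\,\frac{\delta_3}{\delta}
=\frac{1}{\sqrt n}\cdot\frac{1}{\sqrt{pq}}\cdot\frac{\sum_{j=1}^n|x_j|^3}{\sum_{j=1}^n x_j^2}
=\frac{1}{\sqrt{npq}}\cdot\frac{\sum_{j=1}^n|x_j|^3}{\sum_{j=1}^n x_j^2}.
\]
So it suffices to prove $\sum_{j=1}^n|x_j|^3\le\sqrt{npq}\,\sum_{j=1}^n x_j^2$, because this yields even the stronger bound $\tfrac1{\sqrt n}\,\delta_3/\delta\le1$, which lies below $\sqrt{2+\sqrt{8/n}+1/n}$.

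For the remaining inequality I would pull out the largest coordinate: trivially $\sum_j|x_j|^3\le\bigl(\max_j|x_j|\bigr)\sum_j x_j^2$ and $\max_j|x_j|\le\bigl(\sum_j x_j^2\bigr)^{1/2}$, so everything reduces to an \emph{upper} bound for $\sum_j x_j^2$. This is where the only probabilistic input enters: since $0\le p_j\le1$ we have $p_j^2\le p_j$, hence
\[
\sum_{j=1}^n x_j^2=\sum_{j=1}^n p_j^2-np^2\le\sum_{j=1}^n p_j-np^2=np-np^2=npq,
\]
using $\sum_j p_j=np$ once more; this is precisely the relation $\delta\le1$ recorded in the introduction. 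Combining the last three displays gives $\sum_j|x_j|^3\le\sqrt{npq}\,\sum_j x_j^2$, which completes the argument.

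The step I expect to be the only subtle one is recognising \emph{why} a bound uniform in $p$ and in the individual $p_j$ is possible at all: the factor $\sqrt{npq}$ in the denominator can be arbitrarily small, but the constraint $p_j\in[0,1]$ forces the $x_j$ to shrink at exactly the matching rate, and $\sum_j x_j^2\le npq$ is the clean way to encode this trade-off. If one wanted to recover the precise constant of the Lemma with margin to spare, one could sharpen the middle step: applying the Cauchy--Schwarz inequality to the $n-1$ coordinates other than a maximal one (whose sum equals $\mp\max_j|x_j|$) gives $\sum_j x_j^2\ge\frac{n}{n-1}\max_j x_j^2$, whence $\max_j x_j^2\le(n-1)pq$ and the sharper estimate $\tfrac1{\sqrt n}\,\delta_3/\delta\le\sqrt{(n-1)/n}$; the stated bound $\sqrt{2+\sqrt{8/n}+1/n}=\sqrt2+1/\sqrt n$ is considerably weaker and more than sufficient for the applications above.
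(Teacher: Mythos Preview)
Your argument is correct and in fact proves the stronger inequality $\tfrac{1}{\sqrt n}\,\delta_3/\delta\le 1$ (and, with the refinement you outline, $\le\sqrt{(n-1)/n}$), which certainly implies the stated bound $\sqrt{2+\sqrt{8/n}+1/n}=\sqrt2+1/\sqrt n$. The route, however, is genuinely different from the paper's. The paper does not exploit the compact observation $\sum_j(p_j-p)^2\le npq$ directly; instead it treats the cases $p\le 1/2$ and $p>1/2$ by symmetry, uses $|p-p_j|\le p+p_j$ to write $\sum_j|p-p_j|^3\le p\sum_j(p-p_j)^2+\sum_j p_j(p-p_j)^2$, and then controls the second sum via Cauchy--Schwarz together with $\sum_j p_j^2\le np$, arriving at the weaker constant $\sqrt2+1/\sqrt n$. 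Your approach is shorter, avoids the case split, and yields a sharper constant; the paper's method, on the other hand, separates the influence of the weights $p_j$ from the deviations $p_j-p$ more explicitly, which could be useful if one wanted to track dependence on quantities like $\sum_j p_j^2$ individually.
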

\begin{proof}

Let us at first consider the case when \(p\leqslant 1/2\). In such case \(q=1-p\geqslant 1/2\) and as a consequence
\[
\begin{split}
\frac{1}{n}\left(\frac{\delta_3}{\delta}\right)^2&=\frac{1}{npq}\left(\frac{\sum_{j=1}^n|p-p_j|^3}{\sum_{j=1}^n(p-p_j)^2}\right)^2
\\
&\leqslant  \frac{2}{np}\left(\frac{\sum_{j=1}^np(p-p_j)^2+\sum_{j=1}^np_j(p-p_j)^2}{\sum_{j=1}^n(p-p_j)^2}\right)^2
\\
&\leqslant  \frac{2}{np}\left(p+\frac{\sum_{j=1}^np_j(p-p_j)^2}{\sum_{j=1}^n(p-p_j)^2}\right)^2
\\
&\leqslant \frac{2}{np}\left(p^2+2p\frac{\sum_{j=1}^np_j(p-p_j)^2}{\sum_{j=1}^n(p-p_j)^2}+ \left(\frac{\sum_{j=1}^np_j(p-p_j)^2}{\sum_{j=1}^n(p-p_j)^2}\right)^2\right)
\end{split}
\]
Applying the Cauchy inequality to estimate the numerator of the last fraction we get
\[
\begin{split}
\left(\frac{\sum_{j=1}^np_j(p-p_j)^2}{\sum_{j=1}^n(p-p_j)^2}\right)^2
&\leqslant \frac{\sum_{j=1}^np_j^2\sum_{j=1}^n(p-p_j)^4}{\left(\sum_{j=1}^n(p-p_j)^2\right)^2}
\\
&\leqslant  np\frac{\sum_{j=1}^n(p-p_j)^4}{\sum_{j=1}^n(p-p_j)^4+\sum_{\substack{1\leqslant i,j \leqslant n\\
	i\not=j}}(p-p_i)^2(p-p_j)^2}
\\
&\leqslant np
\end{split}
\]
here we have taken into account that \(0\leqslant p_j\leqslant 1\) which implies that
\[
\sum_{j=1}^np_j^2\leqslant \sum_{j=1}^np_j=np.
\]
Hence
\[
\frac{1}{n}\left(\frac{\delta_3}{\delta}\right)^2\leqslant \frac{2}{np}(p^2+2p\sqrt{np}+np)=\frac{2}{n}(p+2\sqrt{np}+n)\leqslant
\frac{2}{n}\left(\frac{1}{2}+2\sqrt{n/2}+n\right)\
\]
If \(p>1/2\) then we can repeat the same argument with \(q_j=1-p_j\) replacing \(p-p_j=q_j-q\) where \(q\) is an arithmetic average of \(q_j\).
\end{proof}

 \section*{Acknowledgments} The author sincerely thanks Prof. Hsien-Kuei Hwang for   discussions on the topic of the paper as well as for his hospitality during the author's visits to Academia Sinica (Taiwan).

\bibliographystyle{apalike}

\end{document}